\newtheorem{theorem}{Theorem}[section]
\newtheorem{lemma}[theorem]{Lemma}
\newtheorem{corollary}[theorem]{Corollary}
\newtheorem{definition}[theorem]{Definition}
\numberwithin{equation}{section}
\begin{document}
	
	\begin{center}
		\vskip 1cm{\LARGE\bf On Schizophrenic Patterns in $b$-ary Expansions of Some Irrational Numbers}
		\vskip 1cm
		\large
		L\'aszl\'o T\'oth\\
		Rue des Tanneurs 7 \\
		L-6790 Grevenmacher \\
		Grand Duchy of Luxembourg \\
		\href{mailto:uk.laszlo.toth@gmail.com}{\tt uk.laszlo.toth@gmail.com}
	\end{center}
	
	\vskip .2 in

	\begin{abstract}
		In this paper we study the $b$-ary expansions of the square roots of the function defined by the recurrence $f_b(n)=b f_b(n-1)+n$ with initial value $f(0)=0$ taken at odd positive integers $n$, of which the special case $b=10$ is often referred to as the ''schizophrenic'' or ''mock-rational'' numbers. Defined by Darling in $2004$ and studied in more detail by Brown in $2009$, these irrational numbers have the peculiarity of containing long strings of repeating digits within their decimal expansion. The main contribution of this paper is the extension of schizophrenic numbers to all integer bases $b\geq2$ by formally defining the schizophrenic pattern present in the $b$-ary expansion of these numbers and the study of the lengths of the non-repeating and repeating digit sequences that appear within.
	\end{abstract}

	\section{Introduction} \label{intro}
	Schizophrenic numbers are often defined as a subclass of irrational numbers referred to as \emph{zebra numbers}, i.e., irrational numbers whose decimal expansion appears to be rational for periods. Delahaye \cite[pp.\ 134--142]{Delahaye04} enumerates several kinds of zebra numbers, such as $\sqrt{10^{60}-1}$ (a special case of Y\'el\'ehada numbers of the form $(10^n-1)^{1/k}$) whose decimal expansion contains a certain pseudo-periodicity, Robert Israel numbers which are of the form $k(n) = \sqrt{\frac{9}{121} 100^n + \frac{112-44n}{121}}$, which present striking ordered patterns in the decimal expansion for $n=95$, and schizophrenic (or mock-rational) numbers.
	
	Darling \cite[p.\ 12]{Darling04} defines schizophrenic numbers as square roots at odd $n$ of the function defined by the recurrence $f(n)=10f(n-1)+n$ with initial value $f(0)=0$ (sequence {A014824} in the OEIS). He observes that, unexpectedly, the decimal expansions of the resulting irrational numbers contain strings of repeating digits. He gives the example of $\sqrt{f(49)}$:
	\begin{center}
		\begin{tabular}{rllll}
			$1.1111111111$& $1111111111$& $1111111111$& $1111111111$& $1111110860$ \\
			$5555555555$  & $5555555555$& $5555555555$& $5555555555$& $5555527305$ \\ 
			$4166666666$  & $6666666666$& $6666666666$& $6666666666$& $6660296260$ \\
			$3472222222$  & $2222222222$& $2222222222$& $2222222222$& $\ldots$ \\ 
			$\times10^{24}$.
		\end{tabular}
	\end{center}
	
	The lengths of the repeating digits progressively become shorter and after a while disappear completely. Brown \cite{Brown09} studies the repeating digits in the decimal expansion of $\sqrt{f(n)}$ for odd positive integers $n=2k-1$ and shows that these are closely related to the Taylor expansion of the square root of the recurrence's solution. Moreover, the author gives a short algorithm using this expansion to predict the repeating digit sequences. 
	
	In this paper we extend the above description of schizophrenic numbers to integer bases $b\geq2$ by formally defining the \textit{schizophrenic pattern} found in the $b$-ary expansion of some irrational numbers. In particular, we study the recurrence defined for positive integers $n$ and $b\geq2$ by $f_b(n)=bf_b(n-1)+n$ and initial value $f_b(0)=0$ and show that the Taylor expansion of the square root of its solution, taken at odd positive integers, allows to construct such schizophrenic patterns in positive integer bases $b\geq2$.  An example is the base-$8$ expansion of $\sqrt{f_8(49)}$:
	\begin{center}
		\begin{tabular}{rllll}
			$1.1111111111$& $1111111111$& $1111111111$& $1111111111$& $1111110600$ \\
			$4444444444$  & $4444444444$& $4444444444$& $4444444444$& $4444402144$ \\ 
			$3333333333$  & $3333333333$& $3333333333$& $3333333333$& $3317512442$ \\ 
			$2666666666$  & $6666666666$& $6666666666$& $6666666666$& $\ldots_8$ \\ 
			$\times8^{24}$.
		\end{tabular}
	\end{center}
	We examine the lengths of the repeating and non-repeating digit blocks that appear within the $b$-ary expansion of such numbers, which allows us to conclude that these patterns grow in size with $n$. This in turn gives rise to the definition of  a \textit{schizophrenic sequence}, a monotonically increasing sequence of irrational numbers $S=(\sqrt{f_b(1)}, \sqrt{f_b(3)}, \ldots, \sqrt{f_b(2k-1)}, \ldots)$ whose terms contain a schizophrenic pattern that grows in size with $n$. We then examine several properties of $b^m$-ary schizophrenic patterns for positive nonzero integer $m$ and examine the relationships between schizophrenic patterns in such bases. 
	
	Throughout the paper we complement our results with illustrative numerical examples computed using Wolfram Mathematica, and we provide the corresponding Mathematica code in the final section. Note that in the remainder of this paper we shall denote by $n_b$ the base-$b$ representation of the number $n$ and all digits greater than $9$ shall be denoted by their commonly accepted alphabetic counterparts; thus, $10$ is denoted by $a$, $11$ by $b$, and so forth.
	
	Finally, we would like to mention that the recurrences $f_b(n)$ are present in the OEIS as sequences {A000340} (for $b=3$, $n<25$), {A014825} ($b=4$, $n<24$), {A014827} ($b=5$, $n<22$), {A014829} ($b=6$, $n<21$) and {A014824} (for $b=10$, $n<21$).  
	
	\section{The schizophrenic pattern} \label{section-schizo}
	
	Let $n>0$ and $b\geq2$ be positive integers and define the recurrence 
	$$f_b(n) = b f_b(n-1) + n$$
	with initial value $f_b(0) = 0$. We begin this section by examining the solution of this recurrence and how its square root taken at positive odd integers generates blocks of repeating and non-repeating digits in its $b$-ary expansion. The solution of the recurrence is easily determined:
	$$
	f_b(n) = \frac{b^{n+1}-b(n+1)+n}{(b-1)^2}.
	$$
	Now let $n=2k-1$ for positive integer $k>0$. We have
	$$
	f_b(2k-1) = \left(\frac{b^{k}}{b-1}\right)^2 \left(1-\frac{(2k-1)(b-1)+b}{b^{2k}}\right).
	$$  
	Taking the square root and using the Taylor expansion for the rightmost factor, we obtain
	\begin{align*}
	\sqrt{f_b(2k-1)} &= \frac{b^k}{b-1} \sum_{l=0}^{\infty}(-1)^l \binom{1/2}{l} \left(\frac{(2k-1)(b-1)+b}{b^{2k}}\right)^l \\
	&= \frac{b^k}{b-1} \left(1 - \left(\frac{1}{2}\right)\left(\frac{(2k-1)(b-1)+b}{b^{2k}}\right) - \left(\frac{1}{8}\right)\left(\frac{((2k-1)(b-1)+b)^2}{b^{4k}}\right) -  \ldots\right).
	\end{align*}
	Our aim is now to show that each term in the Taylor expansion above generates a $b$-ary digit block beginning with a non-repeating digit sequence followed by a repeating digit sequence. Once these blocks are added together, a pattern emerges in the $b$-ary expansion of $\sqrt{f_b(2k-1)}$. Let us now denote by $\tau_l$ the $l^{\rm th}$ term in the Taylor expansion above, i.e.,
	$$
	\tau_l = \frac{b^k}{b-1} (-1)^l \binom{1/2}{l} \left(\frac{(2k-1)(b-1)+b}{b^{2k}}\right)^l.
	$$
	We now examine the various digit contributions that make up the $b$-ary expansion of $\tau_l$. To do this, let us denote $\tau_{l_1}=(-1)^l \binom{1/2}{l}$ the $l^{\rm th}$ binomial coefficient in the Taylor expansion and let $\tau_{l_2}=\frac{b^k}{b^{2kl}}$ and $\tau_{l_3}=\frac{((2k-1)(b-1)+b)^l}{b-1}$, so  $\tau_l=\tau_{l_1}\tau_{l_2}\tau_{l_3}$. First, it is clear that the $b$-ary expansion of of $\tau_l$ will contain an infinitely repeating digit sequence, which is due to the denominator in $\tau_{l_3}$. The contribution of $\tau_{l_2}$ shifts the $b$-ary digits to the right of the radix point. The non-repeating $b$-ary digit sequence before and after the radix point in $\tau_l$ is thus contributed by $\tau_{l_1}\tau_{l_3}$. The following Lemma establishes the length of this non-repeating digit sequence.
	
	\begin{lemma} \label{taylor-non-repeating}
		Let $q$ denote the exponent of $2$ in the denominator of $\tau_{l_1}$ and let $r$ be the smallest positive integer such that $2^q$ divides $b^r$, or $0$ if $b$ is odd. Then the non-repeating digit sequence in the $b$-ary expansion of $\tau_{l_1}\tau_{l_3}$ has length 
		$$
		\lfloor \log_b ( | \tau_{l_1}\tau_{l_3} | ) \rfloor + 1 + r.
		$$
	\end{lemma}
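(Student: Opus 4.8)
The plan is to recognize that $\tau_{l_1}\tau_{l_3}$ is a rational number and to apply the classical description of $b$-ary expansions of rationals: if $p/d$ is in lowest terms and we write $d = d_b\, d'$, where $d_b$ collects the prime powers of the primes dividing $b$ and $\gcd(d',b)=1$, then the expansion is eventually periodic, its non-repeating part after the radix point (the pre-period) has length $\min\{m\ge 0 : d_b \mid b^m\}$, and its integer part contributes $\lfloor \log_b|p/d|\rfloor + 1$ digits. Everything then reduces to pinning down the denominator of $\tau_{l_1}\tau_{l_3}$ in lowest terms and, in particular, its $b$-part $d_b$.

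First I would determine the denominator of $\tau_{l_1}=(-1)^l\binom{1/2}{l}$. The key identity is $\binom{1/2}{l} = \frac{(-1)^{l-1}}{2^{2l-1}}\,C_{l-1}$, where $C_{l-1}=\frac{1}{l}\binom{2l-2}{l-1}$ is the $(l-1)$-st Catalan number and hence an integer; this can be checked directly from the product formula for $\binom{1/2}{l}$ or by a short induction on $l$. Consequently $\tau_{l_1} = -C_{l-1}/2^{2l-1}$, so after reduction its denominator is a pure power of $2$, say $2^q$, and its numerator $N_1$ is odd. This is exactly where the quantity $q$ in the statement enters, and it is the main structural fact underlying the whole lemma.

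Next I would analyse $\tau_{l_3} = M^l/(b-1)$ with $M = (2k-1)(b-1)+b$. Reducing $M$ modulo $b-1$ gives $M\equiv 1 \pmod{b-1}$, so $\gcd(M,b-1)=1$ and hence $\gcd(M^l,b-1)=1$; a parity check (splitting on whether $b$ is even or odd) shows $M$ is odd in all cases. Writing $\tau_{l_1}\tau_{l_3} = \dfrac{N_1 M^l}{2^q(b-1)}$, the odd numerator $N_1 M^l$ is coprime to $2^q$, so the exponent of $2$ in the reduced denominator is still exactly $q$. The decisive observation is that every prime dividing $b-1$ is coprime to $b$ (since $\gcd(b,b-1)=1$), and no odd prime can divide both $b$ and $2^q(b-1)$; therefore any cancellation between $N_1$ and $b-1$ affects only the part of the denominator coprime to $b$, never its $b$-part. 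Hence $d_b = 2^q$ when $b$ is even and $d_b = 1$ when $b$ is odd.

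Finally I would assemble the count. The pre-period length $\min\{m\ge 0: d_b\mid b^m\}$ equals the least positive integer $r$ with $2^q\mid b^r$ when $b$ is even, and equals $0$ when $b$ is odd --- precisely the $r$ of the statement --- while the integer part of $\tau_{l_1}\tau_{l_3}$ supplies $\lfloor\log_b|\tau_{l_1}\tau_{l_3}|\rfloor + 1$ digits, giving the claimed total $\lfloor\log_b|\tau_{l_1}\tau_{l_3}|\rfloor + 1 + r$. The step I expect to be the main obstacle is the first one, establishing that the denominator of $\tau_{l_1}$ is a genuine power of $2$ (the Catalan-number identity does this cleanly, but it must be justified); the other delicate point is the separation in the third step, where the coprimality $\gcd(b,b-1)=1$ is exactly what guarantees that the repeating factor $b-1$ contributes only to the period and never lengthens the non-repeating prefix.
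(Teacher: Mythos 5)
Your proposal is correct and takes essentially the same route as the paper's proof: count the integer-part digits as $\lfloor \log_b(|\tau_{l_1}\tau_{l_3}|)\rfloor + 1$ and apply the classical pre-period formula to the reduced denominator $2^q(b-1)$, whose $b$-part is $2^q$ (even $b$) or trivial (odd $b$). You additionally supply details the paper dismisses as ``elementary means''---the Catalan-number identity establishing that the denominator of $\tau_{l_1}$ is a pure power of $2$, and the checks that $M=(2k-1)(b-1)+b$ is odd and $\equiv 1 \pmod{b-1}$ so that no cancellation alters the $b$-part of the denominator---which makes your write-up a more rigorous version of the same argument rather than a different one.
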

	
	\begin{proof}
		The $b$-ary expansion of $\tau_{l_1}\tau_{l_3}$ will have a non-repeating digit block before the radix point, a non-repeating digit block after the radix point, and a repeating digit block. It is clear that $\lfloor \log_b ( | \tau_{l_1}\tau_{l_3} | )\rfloor +1$ is the length of the non-repeating digit block before the radix point. The length of the non-repeating part after the radix point is determined by elementary means knowing that the denominator of $\tau_{l_1}\tau_{l_3}$ will be of the form $(b-1)2^q$, for some positive integer $q$, due to the binomial coefficient. Indeed, if $2^q$ divides a power of $b$, this length is equal to the smallest positive integer $r$ such that $2^q|b^r$. Otherwise (i.e., for all odd $b$), we set $r=0$. The combined length of the non-repeating digit blocks before and after the radix point is thus equal to $\lfloor \log_b ( | \tau_{l_1}\tau_{l_3} | ) \rfloor + 1 + r$. 
	\end{proof}
	
	Note that in the special case of Brown's ''mock-rational number'', i.e., when $b=10$, we have $n=q$. 
	
	We are now ready to reconstruct the $b$-ary expansion of $\sqrt{f_b(2k-1)}$ using the terms in its Taylor expansion.
	
	\subsection{Schizophrenic blocks} \label{sec-blocks}
	
	The purpose of this section is to use the Taylor expansion described above to define the digit blocks that build up the $b$-ary expansion of $\sqrt{f_b(2k-1)}$. As we will show, each such ''building block'' consists of a non-repeating digit sub-block followed by a repeating digit sub-block and the concatenation in base $b$ of these building blocks constitutes the $b$-ary expansion of $\sqrt{f_b(2k-1)}$. In this section, we will examine the lengths of these sub-blocks before looking at their contribution to certain properties of schizophrenic patterns. To begin, we have the following definition.
	
	\begin{definition}[Schizophrenic block] \label{def-schiz-block}
		Let $n>0$, $k>1$ and $b\geq2$ be positive integers and define the recurrence $f_b(n) = b f_b(n-1) + n$
		with initial value $f_b(0) = 0$. We then define a \textit{schizophrenic block} a block of $b$-ary digits within the $b$-ary expansion of $\sqrt{f_b(2k-1)}$ beginning with a non-repeating digit block followed by a repeating digit block, related to a single term in its Taylor expansion.
	\end{definition}
	
	As we have shown earlier, the $l^{\rm th}$ term in the Taylor expansion, i.e., $\tau_l=\tau_{l_1}\tau_{l_2}\tau_{l_3}$ contributes a non-repeating digit sub-block followed by a repeating digit sub-block to the $b$-ary expansion of $\sqrt{f_b(2k-1)}$. As we will show below, the corresponding schizophrenic block will be very similar to $\tau_l$, with the sole difference of the  leading non-repeating digit sub-block whose beginning is altered due to the repeating digit dub-block of the previous ($(l-1)^{\rm th}$) term in the Taylor expansion. In this section we will examine this in detail. Note that in the following, we will refer to the schizophrenic block generated by the $l^{\rm th}$ term in the Taylor expansion of $\sqrt{f_b(2k-1)}$ as the ''$l^{\rm th}$ schizophrenic block'' and denote it by $s_l$. For instance, the first order term in the Taylor expansion related to $\sqrt{f_{10}(49)}$ generates the following schizophrenic block in its decimal expansion: 
	$$
	0860555555555555555555555555555555555555555555555,
	$$
	it is therefore its $1^{\rm st}$ schizophrenic block, starting with a non-repeating sub-block of length $4$ and a repeating digit sub-block of length $45$. The $2^{\rm nd}$ schizophrenic block is 
	$$
	273054166666666666666666666666666666666666666666,
	$$
	with a non-repeating sub-block of length $7$ and a repeating sub-block of length $41$, and so forth. Note that the digit block before the $1^{\rm st}$ schizophrenic block is always composed solely of $1$s (in base $b$), which is due to the denominator in $\tau_{l_3}$.	
	
	We will now examine the lengths of these sub-blocks and the length of the $l^{\rm th}$ schizophrenic block in more detail. In order to calculate these lengths, we look at the digit contributions of the $l^{\rm th}$ term in the Taylor expansion as well as the contributions of the surrounding schizophrenic blocks. We begin with the length of the non-repeating digit sub-block in the following Theorem.
	
	\begin{theorem} [Non-repeating digit block length] \label{theo-length-nonrepeating}
		Assume the same notation as in Lemma \ref{taylor-non-repeating}, and define the function
		$$
		\epsilon(l) =
		\begin{cases}
		1  & \text{if $d_{l-1}-f_l < 0$,} \\
		0 & \text{otherwise,}
		\end{cases}
		$$
		where $f_l$ is the first digit of $\tau_{l_1}\tau_{l_3}$ and $d_l$ is the repeating digit within the $l^{\rm th}$ schizophrenic block. Then the $l^{\rm th}$ schizophrenic block within the schizophrenic pattern of $\sqrt{f_b(2k-1)}$ begins with a non-repeating digit sub-block of length 
		$$
		\lfloor \log_b ( | \tau_{l_1}\tau_{l_3} | ) \rfloor + 1 + r + \epsilon(l).
		$$
	\end{theorem}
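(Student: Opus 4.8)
The plan is to build on Lemma~\ref{taylor-non-repeating}, which already pins down the length $L_l := \lfloor \log_b(|\tau_{l_1}\tau_{l_3}|)\rfloor + 1 + r$ of the non-repeating digit sequence carried by the $l^{\rm th}$ term in isolation. Since $\tau_l = \tau_{l_1}\tau_{l_2}\tau_{l_3}$ and the factor $\tau_{l_2} = b^k/b^{2kl}$ is a pure power of $b$, multiplication by $\tau_{l_2}$ only shifts the radix point and leaves the internal non-repeating/repeating structure intact; hence the standalone term $\tau_l$ also contributes a non-repeating sub-block of length exactly $L_l$, located at a definite position in the expansion. The task is then to track what becomes of this sub-block once all terms are superposed to form $\sqrt{f_b(2k-1)}$.

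First I would localize the interaction. Because the factor $\left(((2k-1)(b-1)+b)/b^{2k}\right)^l$ decreases geometrically in $l$, the leading position $p_l$ of $\tau_l$ advances by roughly $2k$ digits per term, so $\tau_{l+1}$ and all later terms begin strictly to the right of the non-repeating sub-block of $s_l$ and cannot disturb it; the terms $\tau_0,\dots,\tau_{l-1}$, on the other hand, have already settled into their repeating tails by position $p_l$, and their combined repeating contribution there is precisely the repeating digit $d_{l-1}$ of the preceding schizophrenic block. Thus the only thing governing the left boundary of the $l^{\rm th}$ non-repeating sub-block is a single digit-column operation at position $p_l$: the value $d_{l-1}$ already present from block $l-1$, against the leading digit $f_l$ of the incoming term $\tau_{l_1}\tau_{l_3}$, which for $l\ge 1$ enters with a negative sign since $(-1)^l\binom{1/2}{l}<0$.

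The heart of the argument is then the borrow analysis at column $p_l$. If $d_{l-1}\ge f_l$, the subtraction yields the digit $d_{l-1}-f_l\ge 0$ with no borrow: the repeating run of block $l-1$ terminates cleanly at position $p_l-1$, the non-repeating sub-block of $s_l$ begins at $p_l$ with the length $L_l$ of Lemma~\ref{taylor-non-repeating}, and $\epsilon(l)=0$. If instead $d_{l-1}-f_l<0$, a borrow is required; it decrements the digit at position $p_l-1$ --- an occurrence of the repeating digit $d_{l-1}$ --- turning it into $d_{l-1}-1$ and breaking the repetition one column earlier. The repeating run of block $l-1$ now ends at $p_l-2$, and the non-repeating sub-block of $s_l$ starts at $p_l-1$, one digit earlier, giving length $L_l+1$, which is exactly the correction $\epsilon(l)=1$. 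I would close by checking that the borrow propagates no further, since $d_{l-1}-1$ remains a valid $b$-ary digit whenever $d_{l-1}\ge 1$, and that the repeating sub-block of $s_l$ to the right is unaffected, so the two cases combine into the stated length $\lfloor \log_b(|\tau_{l_1}\tau_{l_3}|)\rfloor + 1 + r + \epsilon(l)$.

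The step I expect to be the main obstacle is making the ``only adjacent blocks interact'' claim fully rigorous and ruling out a cascading borrow. One must confirm that $d_{l-1}\ge 1$ at the relevant column (so that a single borrow suffices), that the spacing $p_{l+1}-p_l$ genuinely exceeds $L_l$ throughout the admissible range of $l$, and that the superposition of the earlier repeating tails really does present the clean constant digit $d_{l-1}$ at column $p_l$ rather than a value already corrupted by carries entering from the left. These are the points at which the informal ``block'' picture must be reconciled with an exact digit-by-digit accounting of the partial sums $\sum_{j=0}^{l}\tau_j$.
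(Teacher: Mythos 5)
Your proposal takes essentially the same route as the paper: Lemma \ref{taylor-non-repeating} supplies the baseline length $\lfloor \log_b(|\tau_{l_1}\tau_{l_3}|)\rfloor + 1 + r$, and the interaction with the preceding block's repeating digit $d_{l-1}$ is absorbed into the correction $\epsilon(l)$, exactly as the paper does. Worth noting, though: the paper's proof stops at asserting that $\epsilon(l)$ ``compensates for a possible additional digit'' when the $l^{\rm th}$ Taylor term is subtracted from the partial sum, so your explicit borrow analysis at the leading column --- including the sign observation $(-1)^l\binom{1/2}{l}<0$ for $l\ge 1$, the non-cascading check via $d_{l-1}-1\ge 0$, and the flagged caveats about the equality case $d_{l-1}=f_l$ with an incoming borrow from lower-order columns and about whether the earlier tails really present a clean constant digit at that column --- is strictly more detailed than the published argument, and the obstacles you identify at the end are genuine gaps that the paper's own proof also leaves unaddressed.
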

	
	\begin{proof}
		Lemma \ref{taylor-non-repeating} establishes that the leading non-repeating digit sequence in the $b$-ary expansion of the $l^{\rm th}$ term in the Taylor expansion of $\sqrt{f_b(2k-1)}$ has length $\lfloor \log_b ( | \tau_{l_1}\tau_{l_3} | ) \rfloor + 1 + r$. When we re-constructing the $l^{\rm th}$ schizophrenic block by subtracting the $l^{\rm th}$ term in the Taylor expansion from the sum of the previous terms, we need to account for a possible additional digit appearing in the non-repeating sub-block. We compensate for this additional digit with the function $\epsilon(l)$. Note that $d_0$ is always equal to $1$, as the digit block situated before the $1^{\rm st}$ schizophrenic block is always composed of a string of $1$s in base $b$.
	\end{proof}
	
	Using the example of $\sqrt{f_{10}(49)}$ above, the length of the non-repeating digit block in the decimal expansion of its $1^{\rm st}$ schizophrenic block is $4$ since $\lfloor \log_{10} (\frac{49 \times 9 + 10}{9\times2} ) \rfloor + 3=4$ (note that $d_{0}=1$, $f_1=2$ so $\epsilon(1)=1$ and $r=1$ as it is the smallest positive integer such that $2|10^r$). The rest of the schizophrenic block is composed of a repeating $b$-ary digit sequence. Before calculating its length, we examine that of the entire $l^{\rm th}$ schizophrenic block in the following Theorem.
	
	\begin{theorem}[Schizophrenic block length] \label{theo-length-block}
		Assume the same notation as in Theorem \ref{theo-length-nonrepeating} and denote by $\lambda_l$ the length of the $l^{\rm th}$ schizophrenic block. Then
		$$
		\lambda_l = 2k(l+1) - (\lfloor \log_b ( | \tau_{({l+1})_1}\tau_{({l+1})_3} | ) \rfloor + 1 + \epsilon(l+1)) - \sum_{i=0}^{l-1}\lambda_i,
		$$
		where $\lfloor m\rfloor$ denotes the greatest integer less than or equal to $m$.
	\end{theorem}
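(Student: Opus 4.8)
The plan is to exploit the fact that the schizophrenic blocks tile the $b$-ary expansion of $\sqrt{f_b(2k-1)}$ consecutively and without overlap, so that the cumulative length $\sum_{i=0}^{l}\lambda_i$ equals exactly the number of digits from the most significant digit of the expansion to the first digit of the $(l+1)^{\rm st}$ block $s_{l+1}$. Writing $\mathrm{start}_{l+1} = 1 + \sum_{i=0}^{l}\lambda_i$ for this one-indexed starting position and isolating $\lambda_l$, the claimed identity is seen to be equivalent to the single location statement
$$
\mathrm{start}_{l+1} = 2k(l+1) - \lfloor \log_b(|\tau_{(l+1)_1}\tau_{(l+1)_3}|)\rfloor - \epsilon(l+1).
$$
Thus the whole theorem reduces to locating where the $(l+1)^{\rm st}$ block begins, which by Definition \ref{def-schiz-block} is the start of its leading non-repeating sub-block.

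First I would pin down the most significant digit of the term $\tau_{l+1}$ itself. Since $\tau_{(l+1)_2} = b^{k}/b^{2k(l+1)} = b^{\,k-2k(l+1)}$ is an integer power of $b$, multiplying by it merely shifts digits, and because the exponent $k-2k(l+1)$ is an integer it may be pulled out of the floor, giving
$$
\lfloor \log_b |\tau_{l+1}| \rfloor = \lfloor \log_b |\tau_{(l+1)_1}\tau_{(l+1)_3}| \rfloor + k - 2k(l+1).
$$
The leading term $\tau_0 = b^k/(b-1)$ has its most significant digit at the power $b^{k-1}$, hence at position $1$; counting digits from there down to the power $b^{\lfloor \log_b|\tau_{l+1}|\rfloor}$ places the leading digit of $\tau_{l+1}$ at position $k - \lfloor \log_b|\tau_{l+1}|\rfloor = 2k(l+1) - \lfloor \log_b|\tau_{(l+1)_1}\tau_{(l+1)_3}|\rfloor$ in the expansion.

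Next I would invoke Theorem \ref{theo-length-nonrepeating}: the non-repeating sub-block of $s_{l+1}$ is the one attached to $\tau_{l+1}$, but its leading end is advanced by one extra digit exactly when $\epsilon(l+1)=1$, i.e., when the repeating digit $d_l$ of the previous block forces a carry into the first digit $f_{l+1}$ of $\tau_{(l+1)_1}\tau_{(l+1)_3}$. Hence the block begins $\epsilon(l+1)$ positions before the leading digit of $\tau_{l+1}$, which yields precisely the location statement displayed above. Substituting it into $\mathrm{start}_{l+1} = 1 + \lambda_l + \sum_{i=0}^{l-1}\lambda_i$ and solving for $\lambda_l$ produces, after the sum telescopes, the stated formula.

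The hard part will be making the tiling claim rigorous: one must justify that these building blocks partition the expansion with neither gaps nor overlaps and that the boundary interaction between consecutive blocks contributes at most the single digit captured by $\epsilon(l+1)$. This requires controlling how the infinite repeating tail of $\tau_l$ (arising from the $1/(b-1)$ factor in $\tau_{l_3}$) interacts with the addition of the much smaller term $\tau_{l+1}$, which is of relative order $b^{-2k}$, and in particular why a carry can propagate across at most one position of the non-repeating head. I would also treat the degenerate base $b=2$ separately, where $\lfloor \log_b(1/(b-1))\rfloor$ behaves differently and the leading-digit bookkeeping for $\tau_0$ must be adjusted accordingly.
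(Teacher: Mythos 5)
Your proposal takes essentially the same approach as the paper: both determine the starting position of the $(l+1)^{\rm th}$ schizophrenic block from the leading-digit position of the Taylor term $\tau_{l+1}$ (equivalently, the count $2k(l+1) - (\lfloor \log_b |\tau_{(l+1)_1}\tau_{(l+1)_3}| \rfloor + 1)$ of zeros it contributes after the radix point), apply the correction $\epsilon(l+1)$ for a possible extra digit, and subtract $\sum_{i=0}^{l-1}\lambda_i$. If anything, your bookkeeping (the shift identity for $\tau_{(l+1)_2}$, the normalization against $\tau_0$'s leading digit) is more explicit than the paper's, and the difficulties you flag at the end --- the tiling claim, the single-digit carry propagation, and the base $b=2$ edge case --- are points the paper's own proof asserts without justification.
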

	
	\begin{proof}
		In order to calculate the length of the $l^{\rm th}$ schizophrenic block, we need to consider the sum of the lengths of the previous schizophrenic blocks and the position where the $(l+1)^{\rm th}$ schizophrenic block begins. The desired length will then be the difference between the latter and the former. 
		
		Consider therefore the $(l+1)^{\rm th}$ term in the Taylor expansion. Clearly, the amount of zeros after the radix point in its $b$-ary expansion will be $2k(l+1)$ minus the amount of digits before the radix point in the digit contribution of this term. Notice that this contribution is equal to the absolute value of $\tau_{({l+1})_1}\tau_{({l+1})_3}$. It is now clear that the amount of zeros after the radix point in the $b$-ary expansion of the  $(l+1)^{\rm th}$ term of the Taylor expansion is
		$$
		2k(l+1) - (\lfloor \log_b ( |\tau_{({l+1})_1}\tau_{({l+1})_3}| ) \rfloor + 1).
		$$
		As with the case of Theorem \ref{theo-length-nonrepeating}, when we re-construct the $b$-ary expansion of $\sqrt{f_b(2k-1)}$ , we need to account for a possible additional digit appearing in the non-repeating sub-block of the $(l+1)^{\rm th}$ schizophrenic block. In order to compensate for this additional digit, we call upon the function $\epsilon$ once more. Thus, denoting the length of the $l^{\rm th}$ schizophrenic block within the schizophrenic pattern of $\sqrt{f_b(2k-1)}$ by $\lambda_l$, we conclude that
		$$
		\lambda_l = 2k(l+1) - (\lfloor \log_b ( |\tau_{({l+1})_1}\tau_{({l+1})_3}| ) \rfloor + 1 + \epsilon(l+1)) - \sum_{i=0}^{l-1}\lambda_i.
		$$
	\end{proof}
	
	Thus in the case of Brown's ''mock-rational number'' $\sqrt{f_{10}(49)}$ as described in Section \ref{intro}, by defining $\Lambda=\{\lambda_0,\lambda_1,\ldots,\lambda_n,\ldots\}$ its sequence of schizophrenic block lengths, we have $\Lambda=\{47,49,48,\ldots\}$. To illustrate this, we take the example of $\lambda_1$. We have $\tau_{2_1}\tau_{2_3} = \left|\binom{1/2}{2} \frac{(49*9+10)^2}{9} \right| = 2825.01388\ldots$ and $\epsilon(2)=0$, therefore
	\begin{align*}
	\lambda_1 &= 50 \times 2 - \left(\lfloor \log_{10} 2825.0138 \ldots ) \rfloor + 1\right) - 47 \\
	&= 100 - (\lfloor 3.4510\ldots \rfloor +1) - 47 \\
	&= 49.
	\end{align*}
	Note that here we have denoted by $\lambda_0$ the length of the repeating digit block of $1$s before the $1^{\rm st}$ schizophrenic block. As shown earlier, this block does not contain a non-repeating digit sequence within its $b$-ary expansion. Note also that the lengths of these schizophrenic blocks is at most $2k$.
	
	We now turn our attention to the length of the repeating digit sub-blocks. The following Theorem follows trivially from Theorems \ref{theo-length-nonrepeating} and \ref{theo-length-block} and is thus given without proof.
	
	\begin{theorem} [Repeating digit block length] \label{theo-length-repeating}
		Assume the same notation as in Theorems \ref{theo-length-block} and \ref{theo-length-nonrepeating}. Then the repeating digit sub-block within the $l^{\rm th}$ schizophrenic block has length 
		$$
		2k(l+1) - (\lfloor \log_b ( |\tau_{({l+1})_1}\tau_{({l+1})_3}| ) \rfloor + \lfloor \log_b ( |\tau_{{l}_1}\tau_{{l}_3}| ) \rfloor) - (\epsilon(l+1) + \epsilon(l)) - (r+2) - \sum_{i=0}^{l-1}\lambda_i.
		$$
	\end{theorem}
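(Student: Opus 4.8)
The plan is to exploit the fact, already built into Definition \ref{def-schiz-block}, that the $l^{\rm th}$ schizophrenic block is by construction the concatenation of exactly two pieces: a leading non-repeating digit sub-block followed by a trailing repeating digit sub-block. Granting this clean two-part structure, the length of the repeating sub-block is nothing more than the total block length minus the length of the non-repeating part. Both of these ingredients are already in hand — the total length $\lambda_l$ from Theorem \ref{theo-length-block} and the non-repeating length from Theorem \ref{theo-length-nonrepeating} — so the whole argument collapses to a single subtraction, which is exactly why the paper records the result without a full proof.

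Concretely, I would first recall that Theorem \ref{theo-length-nonrepeating} gives the non-repeating sub-block length $\lfloor \log_b(|\tau_{l_1}\tau_{l_3}|)\rfloor + 1 + r + \epsilon(l)$, and that Theorem \ref{theo-length-block} gives $\lambda_l = 2k(l+1) - (\lfloor \log_b(|\tau_{({l+1})_1}\tau_{({l+1})_3}|)\rfloor + 1 + \epsilon(l+1)) - \sum_{i=0}^{l-1}\lambda_i$. Subtracting the former from the latter, I would then group the two floor terms into $\lfloor \log_b(|\tau_{({l+1})_1}\tau_{({l+1})_3}|)\rfloor + \lfloor \log_b(|\tau_{l_1}\tau_{l_3}|)\rfloor$, collect the two $\epsilon$-terms into $\epsilon(l+1)+\epsilon(l)$, and absorb the two constants $+1$ together with $r$ into the single term $r+2$. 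This bookkeeping reproduces the claimed expression verbatim; it is a purely mechanical simplification with no new idea.

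The one point genuinely deserving care — the nearest thing to an obstacle — is justifying that the identity ``repeating length $=$ total length $-$ non-repeating length'' holds \emph{exactly}, with no digit lost or double-counted at the boundary between adjacent blocks. This is where I would lean on the construction of Section \ref{sec-blocks}: the correction functions $\epsilon(l)$ and $\epsilon(l+1)$ were introduced precisely to account for the carry/borrow interaction of each block with its neighbours. Since the \emph{same} $\epsilon(l)$ appears in Theorem \ref{theo-length-nonrepeating} and the \emph{same} $\epsilon(l+1)$ appears in Theorem \ref{theo-length-block}, both corrections are propagated consistently through the subtraction, and the two descriptions of the block boundary agree. Once this consistency is verified, the partition is exact, the arithmetic closes, and the theorem follows at once.
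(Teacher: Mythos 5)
Your proposal is correct and matches the paper's intent exactly: the paper states the theorem ``follows trivially from Theorems \ref{theo-length-nonrepeating} and \ref{theo-length-block}'' and omits the proof, and your subtraction of the non-repeating length $\lfloor \log_b(|\tau_{l_1}\tau_{l_3}|)\rfloor + 1 + r + \epsilon(l)$ from $\lambda_l$ reproduces the stated formula verbatim. Your added remark on why the two-part partition of each block makes the subtraction exact is a reasonable elaboration of the very step the paper leaves implicit, not a departure from it.
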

	
	We  are now ready to define the \textit{schizophrenic pattern} found in the $b$-ary expansion of some irrational numbers.
	
	\begin{definition}[Schizophrenic pattern] \label{def-schizo} Let $n>0$, $k>1$ and $b\geq2$ be positive integers and define the recurrence $f_b(n) = b f_b(n-1) + n$ with initial value $f_b(0) = 0$. Moreover, let $S=\{ \sigma_0, \sigma_1, \sigma_2, \ldots \}$ denote the set of consecutive schizophrenic blocks in the $b$-ary expansion of $\sqrt{f_b(2k-1)}$. Then $S$ constitutes its \textit{schizophrenic pattern}.
	\end{definition}
	
	In other words, the successive non-repeating and repeating $b$-ary digit blocks found in the expansion of $\sqrt{f_b(2k-1)}$, whose lengths are given by previous theorems in terms of schizophrenic blocks, form its schizophrenic pattern. We illustrate this definition with the following two examples, one for $b$ even and one for odd. We first examine the case $b=8$, in particular $\sqrt{f_8(49)}$ which in base $10$ yields the following number:
	\begin{center}
		\begin{tabular}{rllll}
			$5.3969902661$& $3673738708$& $1142857142$& $8571428571$& $4219350766$ \\
			$8557456653$  & $9888812550$& $4786980205$& $3485557081$& $3531722463$ \\ 
			$3909432718$  & $6075184755$& $7422859460$& $2682654865$& $0569395455$ \\
			$7886713749$  & $9323823666$& $8040585242$& $4676748726$& $\ldots$ \\ 
			$\times10^{21}$.
		\end{tabular}
	\end{center}
	
	In base $10$ therefore no schizophrenic pattern can be observed. However, in base $8$ the same number begins as follows:
	\begin{center}
		\begin{tabular}{rllll}
			$1.1111111111$& $1111111111$& $1111111111$& $1111111111$& $1111110600$ \\
			$4444444444$  & $4444444444$& $4444444444$& $4444444444$& $4444402144$ \\ 
			$3333333333$  & $3333333333$& $3333333333$& $3333333333$& $3317512442$ \\ 
			$2666666666$  & $6666666666$& $6666666666$& $6666666666$& $\ldots_8$ \\ 
			$\times8^{24}$,
		\end{tabular}
	\end{center}
	and the pattern emerges. More interesting digit sequences appear for odd $b$. Indeed, instead of repeating digits we observe repeating \emph{digit patterns}. For instance, when $b=11$, we have, for $\sqrt{f_{11}(49)}$ in base $11$:
	\begin{center}
		\begin{tabular}{rllll}
			$1.1111111111$& $1111111111$& $1111111111$& $1111111111$& $1111110990$ \\
			$6060606060$  & $6060606060$& $6060606060$& $6060606060$& $6060592135$ \\ 
			$a045a045a0$  & $45a045a045$& $a045a045a0$& $45a045a045$& $a04103a121$ \\
			$79a7245179$  & $a7245179a7$& $245179a724$& $5179a72451$& $\ldots_{11}$ \\ 
			$\times11^{24}$.
		\end{tabular}
	\end{center}
	The pattern then progressively disappears. We shall now examine several properties of schizophrenic patterns that follow from the results in this section.
	
	\subsection{Properties}
	
	We begin this section with a Corollary derived from the results in the previous section.
	
	\begin{corollary} \label{coro-schizo}
		Let $\lambda_{\rm max}$ denote the length of the longest repeating digit sub-block within a schizophrenic pattern in positive integer base $b\geq2$ and let $m$ be the greatest positive integer such that $\lfloor\frac{\lambda_{\rm max}}{m}\rfloor=2$. Then the schizophrenic pattern is also schizophrenic in bases $b^{m_i}$ for all positive integers $m_i$ such that $1<m_i\leq m$.
	\end{corollary}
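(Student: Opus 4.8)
The plan is to exploit the elementary fact that passing from base $b$ to base $b^{m_i}$ is nothing more than partitioning the base-$b$ digit string into consecutive groups of $m_i$ digits, aligned at the radix point, and reading each group as a single base-$b^{m_i}$ digit. Under this grouping a maximal run of a single repeating base-$b$ digit $d$ (or, for odd $b$, a maximal repeating base-$b$ digit pattern) is sent to a run of a single repeating base-$b^{m_i}$ super-digit (resp.\ a repeating super-digit pattern), namely the super-digit whose base-$b$ representation is $\underbrace{d\cdots d}_{m_i}$. Hence each repeating sub-block of the schizophrenic pattern survives the change of base as a repeating sub-block, provided it is long enough to contain at least two complete groups of $m_i$ digits.

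First I would make precise the claim that the schizophrenic pattern persists: by Definition~\ref{def-schizo} together with Theorem~\ref{theo-length-repeating}, the $b$-ary expansion of $\sqrt{f_b(2k-1)}$ decomposes into schizophrenic blocks, each carrying a repeating sub-block whose length is known, and $\lambda_{\rm max}$ is the largest of these lengths. To show that the expansion is still schizophrenic in base $b^{m_i}$ it suffices to exhibit one repeating sub-block that remains a genuine repetition after grouping, and the longest sub-block is the natural candidate.

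Next I would count, for a run of length $L$ and group size $m_i$, the number of complete $m_i$-windows it contains; this is $\lfloor (a+L)/m_i\rfloor-\lceil a/m_i\rceil$, where $a$ is the offset of the run from the radix point, and it is at least $\lfloor L/m_i\rfloor - 1$. Applying this to $L=\lambda_{\rm max}$ and using $m_i\le m$, I have $\lfloor \lambda_{\rm max}/m_i\rfloor\ge \lfloor \lambda_{\rm max}/m\rfloor = 2$ by the definition of $m$, so the longest repeating sub-block contains (at least) two identical super-digits in base $b^{m_i}$. Since a repeating super-digit appearing at least twice is exactly what makes the block recognizably schizophrenic, the schizophrenic pattern is preserved in every base $b^{m_i}$ with $1<m_i\le m$, which is the assertion of the corollary.

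The main obstacle I expect is precisely the alignment bookkeeping in the counting step: because the grouping windows are fixed by the radix point and need not align with the boundaries of a repeating run, a naive count can lose one super-digit, so that $\lfloor \lambda_{\rm max}/m_i\rfloor = 2$ guarantees only a single complete super-digit in the worst phase. Handling this cleanly requires either tightening the hypothesis to force two complete windows regardless of phase, or arguing that the offsets $a$ arising from the explicit block-length formulas of Theorems~\ref{theo-length-nonrepeating} and~\ref{theo-length-block} fall into a favourable alignment class. I would therefore spend most of the effort on the alignment analysis, after which the base-conversion argument is routine.
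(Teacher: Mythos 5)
Your approach is essentially the paper's own: the published proof consists, in its entirety, of your opening paragraph --- conversion to base $b^{m_i}$ regroups the base-$b$ digits into blocks of size $m_i$, and the repeating sub-blocks ``will therefore remain repeating as long as $1<m_i\leq m$.'' The window-counting you then attempt, and the alignment obstacle you flag, are simply absent from the paper: it never distinguishes $\lfloor \lambda_{\rm max}/m_i\rfloor$ from the number of \emph{complete} $m_i$-windows actually contained in the run, i.e., it implicitly assumes the grouping windows (which are anchored at the radix point) align with the start of the repeating sub-block. Your worry is therefore well founded and is a gap in the paper's proof, not a defect peculiar to your attempt: with the hypothesis $\lfloor \lambda_{\rm max}/m\rfloor = 2$ (so $m=\lfloor\lambda_{\rm max}/2\rfloor$), your bound $\lfloor (a+L)/m_i\rfloor - \lceil a/m_i\rceil \geq \lfloor L/m_i\rfloor - 1$ guarantees only \emph{one} complete super-digit at the extreme $m_i=m$ in an unfavourable phase, which exhibits no repetition at all; closing this requires either tracking the actual offsets $a$ via the block-length formulas of Theorems \ref{theo-length-nonrepeating} and \ref{theo-length-block}, or strengthening the hypothesis (e.g.\ to $\lfloor\lambda_{\rm max}/m\rfloor=3$), exactly as you say. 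Two further points in your favour: your reading that it suffices for the \emph{longest} sub-block to survive is weaker and more defensible than the paper's blanket claim that ``the repeating digit blocks'' remain repeating, since any sub-block of length less than $2m_i$ plainly cannot display a repetition after grouping; and your remark that for odd $b$ a repeating digit \emph{pattern} of period $p>1$ becomes a super-digit pattern of period $\mathrm{lcm}(p,m_i)/m_i$ is a detail the paper also glosses over. In short, your proposal is the paper's argument plus the rigor the paper omits; the step you leave open is open in the paper as well.
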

	
	\begin{proof}
		The conversion of $\sqrt{f_b(2k-1)}$ from the integer base $b$ to another integer base $b^{m_i}$ involves regrouping its digits into groups of size $m_i$, then replacing each group with the digit that corresponds to the resulting number in base $b^{m_i}$. The repeating digit blocks will therefore remain repeating as long as $1<m_i\leq m$ where $m$ is a positive integer that satisfies $\lfloor\frac{\lambda_{\rm max}}{m}\rfloor=2$. 
	\end{proof}
	
	Definition \ref{def-schizo} together with the proof of Corollary \ref{coro-schizo} give rise to the following Lemma.
	
	\begin{lemma} \label{lemma-schizo}
		Let $\beta=b^j$ for positive integers $j$ and $b\geq2$. Then, for positive integers $n$, $k>0$ and a recurrence $f_\beta(n)$ defined by $f_\beta(n) = \beta f(n-1) + n$ with initial value $f_\beta(0) = 0$, the numbers $\sqrt{f_\beta(2k-1)}$ contain a schizophrenic pattern in all bases $b^{m_i}$, for  positive integers $m_i$ such that $0<m_i\leq m$, where $m$ is the greatest positive integer satisfying $\lfloor\frac{\lambda_{\rm max}}{m}\rfloor=2$, and $\lambda_{\rm max}$ denotes the length of the longest repeating digit sub-block within the $\beta$-ary schizophrenic pattern.
	\end{lemma}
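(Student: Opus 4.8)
The plan is to treat the base-$b$ expansion of $x=\sqrt{f_\beta(2k-1)}$ as the common refinement of all the bases in play: since $\beta=b^j$ and each target $b^{m_i}$ is itself a power of $b$, every conversion among these bases is merely a regrouping of the single base-$b$ digit string of $x$. The whole argument then reduces to tracking how the non-repeating and repeating sub-blocks behave under such regroupings, exactly in the spirit of the proof of Corollary~\ref{coro-schizo}.

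First I would observe that the recurrence $f_\beta(n)=\beta f_\beta(n-1)+n$ with $f_\beta(0)=0$ is precisely the recurrence of Section~\ref{section-schizo} with the base taken to be $\beta$. Hence the entire construction culminating in Definition~\ref{def-schizo}, together with Theorems~\ref{theo-length-nonrepeating}, \ref{theo-length-block} and~\ref{theo-length-repeating}, applies verbatim and furnishes a schizophrenic pattern for $x$ in base $\beta$, whose longest repeating sub-block has length $\lambda_{\rm max}$ in $\beta$-digits and whose $l^{\rm th}$ block is a non-repeating sub-block followed by a repeating sub-block. Next I would pass from base $\beta=b^j$ down to base $b$ by un-grouping, replacing each $\beta$-digit by its $j$-digit base-$b$ representation. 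Under this map a non-repeating sub-block goes to a non-repeating base-$b$ sub-block, while a repeating sub-block — a single $\beta$-digit (or, for odd $\beta$, a short digit pattern) repeated $\lambda$ times — becomes a base-$b$ string that is periodic with period $j$ and total length $j\lambda$, i.e.\ again a repeating block in the sense of Definition~\ref{def-schiz-block}. Thus each schizophrenic block lifts to a schizophrenic block in base $b$, so $x$ is schizophrenic in base $b$ with longest repeating sub-block of base-$b$ length $j\lambda_{\rm max}$; this already settles the case $m_i=1$.

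Finally, for $1<m_i\le m$ I would invoke the regrouping argument of Corollary~\ref{coro-schizo}: converting from base $b$ to base $b^{m_i}$ collects the base-$b$ digits into consecutive groups of size $m_i$, and by the same standard a repeating base-$b$ block survives as a repeating block in base $b^{m_i}$ provided it retains at least two full groups. Since the longest repeating base-$b$ block has length $j\lambda_{\rm max}\ge\lambda_{\rm max}$ and $m_i\le m$ with $\lfloor\lambda_{\rm max}/m\rfloor=2$, monotonicity of the floor gives $\lfloor j\lambda_{\rm max}/m_i\rfloor\ge\lfloor\lambda_{\rm max}/m\rfloor=2$, so the condition holds and the pattern persists in every base $b^{m_i}$ with $0<m_i\le m$.

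The step I expect to be the main obstacle is this last one, specifically the interaction between the period $j$ inherited from base $\beta$ and the group size $m_i$ of the target base. When $m_i$ is incompatible with $j$, the regrouped string is periodic only with the coarser period $\operatorname{lcm}(j,m_i)/m_i=j/\gcd(j,m_i)$ groups, so guaranteeing two genuine periods requires $\lambda_{\rm max}\gcd(j,m_i)\ge 2m_i$ rather than merely two groups; the defining inequality $\lfloor\lambda_{\rm max}/m\rfloor=2$ forces $2m\le\lambda_{\rm max}$, which supplies exactly the margin $2m_i\le 2m\le\lambda_{\rm max}$ needed in the worst case $\gcd(j,m_i)=1$. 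A careful accounting of the boundary groups — those straddling the junction between a repeating sub-block and the adjacent non-repeating sub-block, or cutting across a period of the $j$-periodic base-$b$ string — is where the real work lies, the non-repeating sub-blocks absorbing these boundary groups just as the $\epsilon$-corrections did in Theorems~\ref{theo-length-nonrepeating} and~\ref{theo-length-block}.
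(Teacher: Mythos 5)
Your proposal is correct and rests on the same core mechanism as the paper's proof --- base conversions among powers of $b$ are digit regroupings, and a repeating block survives regrouping when it is long enough, per Corollary~\ref{coro-schizo} --- but it is organized differently and is in fact more careful. The paper argues in two cases: for $m_i$ between $j$ and $m$ it simply cites Corollary~\ref{coro-schizo} (regrouping the $\beta$-ary digits upward), and for $m_i<j$ it observes that each repeating $\beta$-digit expands into a repeating group of base-$b^{m_i}$ digits. You instead route everything through base $b$ as the common refinement: first ungroup from $\beta=b^j$ to base $b$, obtaining a repeating block of base-$b$ length $j\lambda_{\rm max}$ that is periodic with period $j$, then regroup to base $b^{m_i}$ uniformly for all $0<m_i\leq m$. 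Your route buys two things the paper does not have. First, it covers the mixed case where neither $j\mid m_i$ nor $m_i\mid j$, in which the conversion $b^j\to b^{m_i}$ is \emph{not} a single regrouping; the paper's appeal to Corollary~\ref{coro-schizo} for $m_i$ ``between $j$ and $m$'' silently treats it as one, which is literally valid only when $j\mid m_i$. Second, you identify the genuine subtlety the paper skips entirely: after regrouping, the string of $b^{m_i}$-digits is periodic only with the coarser period $j/\gcd(j,m_i)$ groups, so seeing two full periods requires $\lambda_{\rm max}\gcd(j,m_i)\geq 2m_i$, and you correctly extract the needed margin $2m_i\leq 2m\leq\lambda_{\rm max}$ from $\lfloor\lambda_{\rm max}/m\rfloor=2$. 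The boundary-group accounting you flag as residual work is likewise absent from the paper, so at the paper's level of rigor your argument is at least as complete; the paper's version buys only brevity and direct reuse of the Corollary.
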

	
	\begin{proof}
		By Corollary \ref{coro-schizo}, the Lemma holds for positive integers $m_i$ between $j$ and $m$. Since $\sqrt{f_\beta(2k-1)}$ contains a schizophrenic pattern in base $\beta=b^j$, the repeating digits in its $\beta$-ary representation are expanded into digit groups of larger size in bases $b^{m_i}$ for $m_i<j$. Since these digits are repeating, so are those in bases $b^{m_i}$, $m_i<j$.
	\end{proof}
	
	We illustrate this with the example of $\sqrt{f_3(49)}$, letting $m_1=2$ and $m_2=3$. This number begins, in base $3$, as:
	\begin{center}
		\begin{tabular}{rllll}
			$1.1111111111$& $1111111111$& $1111111111$& $1111111111$& $1111111111$ \\
			$1111101200$  & $2020202020$& $2020202020$& $2020202020$& $2020202020$ \\ 
			$2011010102$  & $0012001200$& $0012001200$& $1200120012$& $0012001200$ \\
			$1021120020$  & $2112100021$& $1210002112$& $1000211210$& $\ldots_{3}$ \\ 
			$\times3^{24}$.
		\end{tabular}
	\end{center}
	The schizophrenic pattern is apparent. Now examine the representation of the same number in base $3^{m_1}=9$:
	\begin{center}
		\begin{tabular}{rllll}
			$1.4444444444$& $4444444444$& $4435066666$& $6666666666$& $6666664112$ \\
			$0505050505$  & $0505050503$& $3750675307$& $5307530753$& $0740552382$ \\ 
			$4225078164$  & $4731127658$& $2207712484$& $0766815054$& $\ldots_{9}$ \\
			$\times9^{12}$.
		\end{tabular}
	\end{center}
	In base $9$ the number still contains a schizophrenic pattern. Finally, in base $3^{m_2}=27$:
	\begin{center}
		\begin{tabular}{rllll}
			$1.dddddddddd$& $ddddd526k6$& $k6k6k6k6k6$& $ja3if51if5$& $1if51ia7f6$ \\
			$ml2e97g0ml$  & $2d1n787b7i$& $njjdm13pjq$& $6ina7hc7k8$& $\ldots_{27}$ \\ 
			$\times27^{8}$.
		\end{tabular}
	\end{center}
	As we can see the schizophrenic pattern in these bases is preserved. We now turn our attention to sequences of schizophrenic numbers.

	\subsection{Schizophrenic sequences} \label{section-schizo-sequence}
	
	Let $f_b$ denote once more the recurrence defined in Definition \ref{def-schizo}. From our results in Section \ref{sec-blocks}, it is clear that $\sqrt{f_b(2k+1)}$ contains a schizophrenic pattern in its $b$-ary expansion that grows in size with $k$. This observation leads to the following Definition.
	
	\begin{definition} [Schizophrenic sequence] \label{def-seq-schizo} Let $f_b(n) = b f_b(n-1) + n$ be a recurrence with initial value $f_b(0) = 0$ for fixed positive integer $b\geq2$. Then the monotonically increasing sequence of irrational numbers $S=(\sqrt{f_b(1)}, \sqrt{f_b(3)}, \ldots, \sqrt{f_b(2k-1)}, \ldots)$, in which each term contains a schizophrenic pattern in its $b$-ary expansion that grows in size with $k$, is schizophrenic.
	\end{definition}
	
	We illustrate the above definition with the case $b=5$. The sequence $S=(\sqrt{f_5(7)}, \sqrt{f_5(9)}, \ldots, \sqrt{f_5(23)})$ begins as follows:
	
	\begin{table}[h!] \caption{Growing schizophrenic patterns in the base-$5$ expansion of the numbers $\sqrt{f_5(n)}$} \label{tab-base5}\centering
		\begin{tabular}{|c|l|}
			\hline 
			$n$ & $\sqrt{f_5(n)}$ \\ 
			\hline 
			$7$ & $1111.1102030301340212321423323443031320022421310240_5$ \\ 
			\hline 
			$9$ & $11111.111010303030100244100302243334320304302441412_5$ \\ 
			\hline 
			$11$ &$111111.11110003030303000302132433034013044313334032_5$ \\ 
			\hline 
			$13$ &$1.1111111111044030303030244012441021320101332242102_5\times5^6$ \\ 
			\hline 
			$15$ &$1.1111111111110430303030303024241021324410201331002_5\times5^7$  \\ 
			\hline 
			$17$ &$1.1111111111111104203030303030302412124410213244033_5\times5^8$ \\ 
			\hline 
			$19$ &$1.1111111111111111041030303030303030234430213244102_5\times5^9$ \\ 
			\hline 
			$21$ &$1.1111111111111111110400303030303030303023310244102_5\times5^{10}$ \\ 
			\hline 
			$23$ &$1.1111111111111111111103403030303030303030302311402_5\times5^{11}$ \\ 
			\hline 
		\end{tabular}
	\end{table}
	
	As we can see, the schizophrenic pattern in grows in size with $n$ - the repeating digit blocks increase in length and new blocks are formed containing new repeating digit sequences. The sequence  $S$ of irrational numbers is thus schizophrenic. 
	
	In the following final section we will present computer code that we used to obtain our results in previous sections.

	\section{Mathematica code}
	
	As stated earlier, all numerical computations presented in this paper were performed with Wolfram Mathematica $11.1$. In this section we present some of the code we used to obtain schizophrenic numbers in various bases. Note that the output generated by Mathematica uses the same notation as this paper for digits greater than $9$, i.e., $10$ is denoted by $a$, $11$ by $b$, and so forth. 
	
	We begin by defining $f_b(n)$ as the function \texttt{f} below using the equation of its solution, $f_b(n) = \frac{b^{n+1}-b(n+1)+n}{(b-1)^2}$. We then define another function \texttt{g} in order to display $\sqrt{f_b(n)}$ in base $b$ with arbitrary precision. Thus,
	
	\begin{verbatim}
	
	Clear[f]; Clear[g];
	f[b_, n_] := (b^(n + 1) - b*(n + 1) + n)/(b - 1)^2;
	g[b_, n_, precision_] := 
	BaseForm[NumberForm[N[Sqrt[f[b, n]], precision], precision], b];
	
	\end{verbatim}
	
	For instance, getting the value of $\sqrt{f_{13}(49)}$ with a precision of $140$ digits, i.e.,
	
	\begin{verbatim}
	
	g[13, 49, 140]
	
	\end{verbatim}
	
	gives
	\begin{center}
		\begin{tabular}{rllll}
			$1.1111111111$& $1111111111$& $1111111111$& $1111111111$& $1111110c20$ \\
			$7070707070$  & $7070707070$& $7070707070$& $7070707070$& $70706baa16$ \\ 
			$ba205386ba$  & $205386ba20$& $5386ba2053$& $86ba205386c$& $\ldots_{13}$ \\
			$\times13^{24}$.
		\end{tabular}
	\end{center}
	
	In Definition \ref{def-seq-schizo} we introduced schizophrenic sequences. Our example was the base-$5$ schizophrenic sequence $S=(\sqrt{f_5(7)}, \sqrt{f_5(9)}, \ldots, \sqrt{f_5(23)})$, presented in Table \ref{tab-base5}. To obtain the values therein, we used the following one-line code:
	
	\begin{verbatim}
	
	Table[g[5, n, 50], {n, 7, 23, 2}]
	
	\end{verbatim}

\end{document}